\documentclass[11pt]{amsart}
\usepackage{amsmath, amsrefs, amsthm, amssymb}
\usepackage{graphicx}
\usepackage[active]{srcltx}		
\setlength{\textwidth}{15cm}
\setlength{\oddsidemargin}{0cm}
\setlength{\evensidemargin}{0cm}
\setlength{\topmargin}{0cm}
\setlength{\textheight}{22cm}
\linespread{1.1} 
\newcommand{\dd}{\,{\rm d}}

\newcommand\R{{\mathbb{R}}}

\newcommand\T{{\mathbb{T}}}

\newcommand\rodnl{\Bigl(\frac{3-\gamma}{2}u^2+\frac{\gamma}{2}u_x^2\Bigr)}

\newtheorem{theorem}{Theorem}[section]
\newtheorem{proposition}[theorem]{Proposition}
\newtheorem{lemma}[theorem]{Lemma}
\newtheorem{corollary}[theorem]{Corollary}

\theoremstyle{definition}

\theoremstyle{remark}
\newtheorem{remark}[theorem]{Remark}

\numberwithin{equation}{section}
\numberwithin{figure}{section}

\begin{document}

\title[Local blowup criteria]{Local-in-space criteria for blowup in shallow water and dispersive rod equations}

\author{Lorenzo Brandolese}

\address{L. Brandolese: Universit\'e de Lyon~; Universit\'e Lyon 1~;
CNRS UMR 5208 Institut Camille Jordan,
43 bd. du 11 novembre,
Villeurbanne Cedex F-69622, France.}
\email{brandolese{@}math.univ-lyon1.fr}
\urladdr{http://math.univ-lyon1.fr/$\sim$brandolese}




\keywords{Camassa--Holm, rod equation, blowup, wave breaking, nonlinear waves, dispersive waves.}

\begin{abstract}
We unify a few of the best known results on  wave breaking for the Camassa--Holm equation (by R. Camassa, A.~Constantin, J.~Escher, L. Holm, J. Hyman and others)  in a single  theorem: a sufficient condition for the breakdown is that $u_0'+|u_0|$ is strictly negative in at least one point $x_0\in\R$. Such blowup  criterion looks more natural than the previous ones, as the condition on the initial data is purely local in the space variable. Our method relies on the introduction of two families of Lyapunov functions. Contrary to McKean's necessary and sufficient condition for blowup, our approach  applies to other equations that are not integrable: we illustrate this fact by establishing new local-in-space blowup criteria for an equation modeling nonlinear dispersive waves in elastic rods.
\end{abstract}

\maketitle

\section{Introduction}

\label{sec:localblow}
For many evolution equations, typical well-posedness results hold a priori only for short time intervals.
A basic problem is to establish whether such intervals can be taken of arbitrary length.
When it is not the case, one expects to find a maximal existence time $T^*<\infty$ and some spatial norms of the solution such that $\|u(t,\cdot)\|$  is finite for $t\in(0,T^*)$ and becomes unbounded as $t\uparrow T^*$.
A blowup criterion is nothing but a condition on the initial data $u_0(x)$ implying the latter scenario.
In this paper we will focus on blowup criteria of a very special nature: \emph{local-in-space blowup criteria}.
This notion requires the introduction of an equivalence relation in the class of initial data.

Let $x_0\in \Omega$, where $\Omega$ is the spatial domain where the PDE is considered. We say  that two initial data are ``equivalent at $x_0$'' if they are identical in a neighborhood of $x_0$.
A \emph{local blowup criterion at $x_0$} is a condition on the equivalence class $[u_0]$, implying the finite time blowup of \emph{any solution} arising from initial data equivalent to~$u_0$.

For evolution equations involving  diffusive phenomena, blowup criteria are typically non local with respect to the space variable: indeed, it is usually possible to prevent the blowup of the solutions by perturbating the initial data only in some regions, leaving the data unchanged in the other regions. 

For non-diffusive equations the situation is different.
The simplest example is provided by the inviscid Burgers equation on the line, $u_t+uu_x=0$. Applying the method of the characteristics shows that the condition for the formation of shock waves is that $u_0$ has a negative
derivative in some point~$x_0\in\R$.

The goal of this paper is to establish the analogue of such elementary fact for more realistic models arising in one dimensional shallow water theory and for equations modeling the propagation of  dispersive waves in elastic rods.
Such models are well-posed in Sobolev spaces $H^s(\R)$, at least during a short time interval, for sufficiently large  $s>0$. Our blowup criterion will simply read
\begin{equation} 
\label{BRE}
\inf_{\R}\bigl(u_0'+\beta_\gamma |u_0|\bigr)<0,
\end{equation}
where $\beta_\gamma$ is a constant depending on the physical parameter $\gamma$ (related to the Finger deformation tensor) of the model.
For the celebrated Camassa--Holm equation, such criterion boils down to
\begin{equation}
\label{BCH-inf}
\inf_{\R}\bigl(u_0'+ |u_0|\bigr)<0.
\end{equation}

In the last 20 years, several hundreds of papers were devoted to study of the Camassa--Holm equation or its generalizations, and many of them addressed the issue of blowup (or wave breaking).
At best of our knowledge,  criteria~\eqref{BRE}-\eqref{BCH-inf} remained unnoticed, despite they were anticipated by many earlier results. In fact, we unify in this way some of the best known blowup criteria,
including those established by  R.~Camassa, A.~Constantin, J.~Escher, L.~Holm, J.~Hyman, Y.~Zhou, etc.).
See Section~\ref{refsCH} for a short survey of previous results and the relevant references.

A specific feature of the blowup criteria~\eqref{BRE}-\eqref{BCH-inf} is that they are local
 (in the sense of the previous definition)  at the point $x_0$ where the negative infimum is achieved. This means that it is impossible to prevent the blowup without modifying  $u_0$ \emph{around the point $x_0$}: perturbing $u_0$ only outside a neighborhood of~$x_0$ may  just help in delaying the formation of singularities.

A related  physical interpretation is that fast oscillations will always lead to a blowup for such models (i.e., to the formation of a breaking wave or to the breaking of the rod in finite time), no matter  how small is the region where the oscillations are present and how small are their amplitude.

The organization of the paper is straightforward. There is only one new theorem (Theorem~\ref{th:blow}).
We state it and compare it with previously known results in Section~2. Section~3 is devoted to its proof and Section~4 to technical remarks  and concluding observations.

\section{The main result}

\subsection{The compressible hyper-elastic rod equation}

The propagation of nonlinear waves inside cylindrical hyper-elastic rods, assuming that the diameter is small when compared to the axial length scale, is described by the one dimensional equation
\[
v_\tau+\sigma_1 v v_\xi+\sigma_2 v_{\xi\xi\tau}+\sigma_3(2v_\xi v_{\xi\xi}+vv_{\xi\xi\xi})=0,
\qquad
\xi\in\R, \; \tau>0.
\]
Such equation was derived by H.H.~Dai \cite{Dai98}.
Here $v(\tau,\xi)$ represents the radial stretch relative to a prestressed state, $\sigma_1\not=0$, $\sigma_2<0$ and
$\sigma_3\le0$ are physical constants depending by the material.
The scaling transformations
\[
\tau=\frac{3\sqrt{-\sigma_2}}{\sigma_1}t,\qquad
\xi=\sqrt{-\sigma_2}x,
\]
with $\gamma=3\sigma_3/(\sigma_1\sigma_2)$ and $u(t,x)=v(\tau,\xi)$,
allow us to reduce the above equation to
\begin{equation}
\label{rod-pde}
u_t-u_{xxt}+3uu_x=\gamma(2u_xu_{xx}+uu_{xxx}),
\qquad x\in\R,\;t>0.
\end{equation}

We complement this equation with vanishing boundary conditions at $\pm\infty$: such boundary conditions will be taken into account through an appropriate choice of the functional setting, guaranteeing the well-posedness of the Cauchy problem.
The existence and the orbital stability of solitary waves for equation~\eqref{rod-pde}
is discussed {\it e.g.\/} in~\cite{ConStra00}.
In the case $\gamma=1$, solitary waves are peaked solitons.

For $\gamma=0$, the rod equation~\eqref{rod-pde} reduces to  the well-known BBM equation~\cite{BBM72}, modeling surface waves in a canal. In this case the solutions exist globally, meaning that the BBM equation can model permanent waves, but is unsuitable for describing breaking waves.

For $\gamma=1$~\eqref{rod-pde} becomes the Camassa--Holm equation (CH), modeling long waves in shallow water. Such equation marked an important development in nonlinear dynamics and is of great current interest~\cite{CamHol93}, \cite{AConL09}, \cite{GlaSu}.
It admits strong solutions that exist globally and others that lead to a wave breaking in finite time.
In this case equation~\eqref{rod-pde} has a bi-hamiltonian structure and is integrable. 
The Camassa--Holm equation is thus much better understood than equation~\eqref{rod-pde}.
Useful survey papers on such equation are~\cite{ConstEDP} and~\cite{Mol04}.

We denote by  
\[ 
p(x)=\textstyle\frac{1}{2}e^{-|x|}
\]
the fundamental solution of the operator $1-\partial_x^2$.
Let $y=u-u_{xx}$ be the potential of~$u$. We thus have $u=p*y$, and
$y$ satisfies
\begin{equation*}
y_t+\gamma y_x u+2\gamma y u_x+3(1-\gamma)uu_x=0.
\end{equation*}

It is also convenient to rewrite the Cauchy problem associated with equation~\eqref{rod-pde} 
in the following weak form:
\begin{equation}
\label{rod}
\begin{cases}
u_t+\gamma uu_x=-\partial_x p*\biggl(\displaystyle\frac{3-\gamma}{2}\,u^2+\frac{\gamma}{2}\,u_x^2\biggr), &\;t\in(0,T),\; x\in\R,\\
u(0,x)=u_0(x).
\end{cases}
\end{equation}

For any $\gamma\in\R$, the Cauchy problem for the rod equation is locally well-posed in $H^s$, when $s>3/2$.
More precisely, if $u_0\in H^s(\R)$, $s>3/2$, then there exists a maximal time $0<T^*\le\infty$
and a unique solution $u\in C([0,T^*),H^s)\cap C^1([0,T^*),H^{s-1})$.
Moreover, the solution~$u$ depends continuously on the initial data. 

It is also known that~$u$ admits the invariants
\[
E(u)=\int_\R (u^2+u^2_x)\,dx
\]
and
\[
F(u)=\int_\R (u^3+\gamma uu_x^2)\,dx.
\]
In particular, the invariance of the Sobolev $H^1$-norm of the solution implies that $u(x,t)$ remains uniformly bounded up
to the time~$T^*$.
On the other hand, if $T^*<\infty$ then $\limsup_{t\to T}\|u(t)\|_{H^s}=\infty$ ($s>3/2$) and the precise blowup scenario
of strong solutions is the following:
\begin{equation}
\label{blowup-scena}
T^*<\infty \iff \liminf_{t\to T^*}\Bigl(\inf_{x\in\R}\gamma u_x(t,x)\Bigr)=-\infty.
\end{equation}
Applying the above result with $\gamma=0$ one recovers the fact that solutions of the BBM do not blowup.

The above results are due to Constantin and Strauss \cite{ConStra00}. They were previously established in the
particular case $\gamma=1$ corresponding to the Camassa--Holm equations  (see e.g. \cite{ConEschActa}).

The main result of the present paper is the following blowup criterion for equation~\eqref{rod},
in the case $1\le \gamma\le4$ (for $\gamma\not\in[1,4]$, see Section~\ref{sec-erod}).

\begin{theorem}
\label{th:blow}
Let $1\le \gamma\le4$. There exists a constant $\beta_\gamma\ge0$ (given by the explicit formula~\eqref{def:beta} below, see  Figure~\ref{figure1}) with the following property: 
Let  $T^*$ be the maximal time of the unique solution $u$ of equation~\eqref{rod} in $C([0,T^*),H^s)\cap C^1([0,T^*),H^{s-1})$   arising from $u_0\in H^s(\R)$, with $s>3/2$.
Assume that there exists $x_0\in\R$ such that 
\[
 u_0'(x_0)<-\beta_\gamma|u_0(x_0)|,
\]
then $T^*<\infty$.
\end{theorem}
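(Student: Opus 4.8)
The plan is to track the solution along the flow of the transport field $\gamma u$ and to build, at the point $x_0$, a differential inequality for a well-chosen auxiliary quantity that forces $\inf_x \gamma u_x(t,\cdot)\to-\infty$ in finite time, which by the blowup scenario~\eqref{blowup-scena} gives $T^*<\infty$. Concretely, introduce the characteristic $q(t)$ defined by $\dot q(t)=\gamma u(t,q(t))$, $q(0)=x_0$, and set $M(t)=u_x(t,q(t))$ and $m(t)=u(t,q(t))$. Differentiating~\eqref{rod} along $q$ gives $\dot m = -\partial_x p * \rodnl$ evaluated at $q$, and differentiating once more gives an ODE of the Riccati type
\begin{equation}
\label{plan:Ric}
\dot M = -\frac{\gamma}{2}M^2 + \frac{3-\gamma}{2}\,m^2 + p*\rodnl\Big|_{q(t)} - \frac{3-\gamma}{2}m^2,
\end{equation}
where the convolution term is the genuinely nonlocal part. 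The key point is that for $1\le\gamma\le4$ the integrand $\frac{3-\gamma}{2}u^2+\frac{\gamma}{2}u_x^2$ is a nonnegative quadratic form in $(u,u_x)$, so the term $p*(\cdots)\ge0$ and can be discarded in the direction of the inequality we want — except that we also need a \emph{lower} bound on that convolution in terms of local data in order to make the argument work near $x_0$, and this is where the Lyapunov functions promised in the abstract enter.

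The heart of the proof will be the construction of the two families of Lyapunov functions. The natural candidates are quantities of the form
\[
P_\pm(t) = p*\rodnl\Big|_{q(t)} \;\pm\; \text{(a multiple of)}\; \partial_x p * \rodnl\Big|_{q(t)},
\]
i.e. $\bigl(p\pm p'\bigr)*\rodnl$, which because $p(x)=\frac12 e^{-|x|}$ satisfy $p'(x)=-\tfrac12\,\mathrm{sgn}(x)e^{-|x|}$ so that $p+p'$ and $p-p'$ are (up to sign) one-sided exponentials; these are exactly the objects that appear in the Camassa--Holm blowup literature of Constantin--Escher and Zhou. One computes that along the flow $\dot P_\pm$ can be controlled: differentiating under the convolution, using the equation for $u^2$ and $u_x^2$, and integrating by parts, one gets $\dot P_\pm \ge -\gamma\, u_x(t,q(t))\, P_\pm$ plus sign-definite remainder terms that are favorable when $1\le\gamma\le4$. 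Hence $P_\pm(t)\ge P_\pm(0)\exp\bigl(-\gamma\int_0^t M(s)\,ds\bigr)$, so $P_\pm$ keeps its sign. Feeding the resulting lower bound $p*(\cdots)\ge$ (something controlled) back into~\eqref{plan:Ric}, one obtains a closed differential inequality of the form $\dot M \le -\tfrac{\gamma}{2}M^2 + C$ valid as long as the relevant combination of $m$ and $M$ stays in the region cut out by the hypothesis $u_0'(x_0)<-\beta_\gamma|u_0(x_0)|$; the role of $\beta_\gamma$ is precisely to guarantee that the initial data lies in the basin where $\dot M$ stays negative enough and $M\to-\infty$ in finite time. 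The explicit formula~\eqref{def:beta} should come out of optimizing the constants in these estimates.

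The main obstacle, as I see it, is twofold. First, one must get the \emph{right} Lyapunov functionals so that their time derivatives have a sign for the whole range $1\le\gamma\le4$ rather than just $\gamma=1$: the coefficient $\frac{3-\gamma}{2}$ changes sign relative to $\frac{\gamma}{2}$, and the cross terms produced by differentiating $p*(u^2)$ versus $p*(u_x^2)$ must be shown to combine into something of definite sign, which is a somewhat delicate algebraic bookkeeping with the exponential kernel. Second, one has to combine the two inequalities (the Riccati inequality for $M$ and the monotonicity/positivity of $P_\pm$) into a single autonomous system whose trajectories starting from the hypothesis region are shown to reach $M=-\infty$ in finite time — this requires a phase-plane analysis in the variables $(m,M)$ (or in suitably rescaled variables, e.g. $M/|m|$), and identifying the sharp threshold $\beta_\gamma$ amounts to locating the separatrix of that planar system. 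Everything else — local well-posedness, the invariance of $E(u)$ giving the uniform $L^\infty$ bound $\|u(t)\|_\infty\le C\|u_0\|_{H^1}$, the $L^\infty$ bound on $p*(\cdots)$, and the blowup scenario~\eqref{blowup-scena} — is quoted from the results recalled above and only enters as input.
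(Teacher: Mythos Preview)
Your overall architecture is right: follow the flow $q_t=\gamma u(t,q)$, derive a Riccati-type inequality for $M(t)=u_x(t,q(t))$, and use Lyapunov functions to control the nonlocal convolution so that the condition $u_0'(x_0)<-\beta_\gamma|u_0(x_0)|$ propagates and forces $M\to-\infty$. But the Lyapunov functions you propose are the wrong ones, and this is a genuine gap.

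You guess that the Lyapunov functions are the \emph{nonlocal} one-sided convolutions $P_\pm=(p\pm p')*\bigl(\tfrac{3-\gamma}{2}u^2+\tfrac{\gamma}{2}u_x^2\bigr)$ evaluated along the flow, and you claim (without computation) that $\dot P_\pm\ge -\gamma M\,P_\pm$ up to favorable remainders. The paper does \emph{not} do this, and there is no indication such a differential inequality holds; even if it did, sign-preservation of $P_\pm$ alone would not close the Riccati inequality for $M$ in any obvious way. Instead, the paper's Lyapunov functions are \emph{local} in $(u,u_x)$:
\[
A(t,x)=e^{\beta_\gamma q(t,x)}\bigl(\beta_\gamma u-u_x\bigr)(t,q(t,x)),\qquad
B(t,x)=e^{-\beta_\gamma q(t,x)}\bigl(\beta_\gamma u+u_x\bigr)(t,q(t,x)).
\]
The exponential weight $e^{\pm\alpha q}$ with $\alpha=\beta_\gamma$ is essential and is found by minimizing a discriminant; it is this choice that makes $A_t\ge0$ and $B_t\le0$ simultaneously. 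The nonlocal term is handled not dynamically but by a \emph{static pointwise} lemma: for $0\le\beta\le1$ and $0\le\gamma\le4$,
\[
(p\pm\beta\partial_x p)*\Bigl(\tfrac{3-\gamma}{2}u^2+\tfrac{\gamma}{2}u_x^2\Bigr)\ge \delta\,u^2,
\qquad \delta=\tfrac{\sqrt\gamma}{4}\bigl(\sqrt{12-3\gamma}-\sqrt\gamma\bigr),
\]
proved by splitting $p=p{\bf 1}_{\R^+}+p{\bf 1}_{\R^-}$ and an integration by parts. This inequality feeds into the computations of $A_t$ and $B_t$, and the restriction $1\le\gamma\le4$ is exactly the condition $\beta_\gamma\le1$ needed for the lemma to apply with $\beta=\beta_\gamma$. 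Once $A$ is nondecreasing and $B$ nonincreasing, the factorization $\beta_\gamma^2 u^2-u_x^2=AB$ gives $AB(t)\le AB(0)<0$ for all $t$ (since the hypothesis at $x_0$ says $A(0)>0>B(0)$), and the Riccati inequality $\dot M\le \tfrac{\gamma}{2}AB$ then blows up in a standard way.

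In short: your ``two Lyapunov families'' should be pointwise quantities in $(u,u_x)$ with an exponential spatial weight, not convolutions tracked in time; and the nonlocal term is dispatched once and for all by a sharp convolution inequality, not by a Grönwall argument. Your sketch of the endgame (phase-plane, separatrix at slope $\beta_\gamma$) is morally correct but becomes unnecessary once $A$ and $B$ are in hand.
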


\begin{remark}
It was observed in~\cite{ConStra00} that, for $\gamma=3$, all non-zero solutions develop a singularity in finite time. This conclusion agrees with Theorem~\ref{th:blow}, since  $\beta_3=0$ and any non-zero initial data 
$u_0\in H^s$
must have a strictly negative derivative in some point.
In fact, the function $\gamma\mapsto\beta_\gamma$, defined for $\gamma\in[1,4]$, 
is continuous,  strictly decreasing on~$[1,3]$ and increasing on $[3,4]$.  Moreover, $\beta_1=1$, $\beta_3=0$
and $\beta_{4}=\frac{1}{2}$.
\end{remark}

Let us restate explicitly the result corresponding to the wave breaking for the Camassa--Holm equation 
$(\gamma=1$):

\begin{corollary}
\label{co:blow} 
Let  $T^*$ the maximal time of the unique solution~$u\in C([0,T^*),H^s)\cap C^1([0,T^*),H^{s-1})$  of the
Camassa--Holm equation on~$\R$,
\begin{equation}
\label{eq-ch}
u_t+ uu_x=-\partial_x p*\biggl(u^2+\frac{1}{2}u_x^2\biggr),
\end{equation}
starting from $u_0\in H^s(\R)$, with $s>3/2$. 
If there exists $x_0\in\R$ such that
\[
  u_0'(x_0)<-|u_0(x_0)|,
\]
then $T^*<\infty$.
\end{corollary}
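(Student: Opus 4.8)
The plan is to follow the classical wave-breaking strategy along particle trajectories, but to replace the usual single Riccati-type inequality by a pair of Lyapunov functionals adapted to the local structure of the data. First I would introduce the flow map $q(t,x)$ defined by $q_t=\gamma u(t,q)$, $q(0,x)=x$, which is a diffeomorphism of $\R$ for each $t<T^*$, and record that along this flow the quantities $u$ and $u_x$ evolve by ODEs obtained from~\eqref{rod}: writing $U(t)=u(t,q(t,x_0))$ and $V(t)=u_x(t,q(t,x_0))$, one gets $\dot U = -\,\partial_x p * (\cdots)$ evaluated at $q$, and differentiating the equation once more,
\begin{equation*}
\dot V = -\gamma V^2 + \tfrac{3-\gamma}{2}U^2 + \tfrac{\gamma}{2}V^2 - p*\Bigl(\tfrac{3-\gamma}{2}u^2+\tfrac{\gamma}{2}u_x^2\Bigr)\Big|_{q},
\end{equation*}
using $\partial_x^2 p * f = p*f - f$. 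The key analytic input is that, since $u$ stays bounded in $H^1$ by the invariance of $E(u)$, the convolution term $p*(\cdots)$ is a nonnegative (for $1\le\gamma\le 3$, and controlled for $\gamma\le 4$) bounded quantity, so $\dot V \le -\tfrac{\gamma}{2}V^2 + \tfrac{3-\gamma}{2}U^2$ up to harmless terms; the difficulty is that the $U^2$ term has the wrong sign and must be absorbed.

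The device to absorb it is to work not with $V$ alone but with the two families of functions suggested in the abstract, of the form $V \pm \beta_\gamma |U| + (\text{correction})$, or equivalently to track $u_x + \beta_\gamma u$ and $u_x - \beta_\gamma u$ along the flow and show that the open half-line $\{u_x(t,q(t,x_0)) < -\beta_\gamma |u(t,q(t,x_0))|\}$ is invariant under the dynamics. Concretely, I would compute $\frac{d}{dt}\bigl(V+\beta_\gamma U\bigr)$ and $\frac{d}{dt}\bigl(V-\beta_\gamma U\bigr)$ along $q(t,x_0)$, combine them so as to get a differential inequality of the form $\dot W \le -c\,W^2$ for $W=V$ as long as the sign condition persists, with $c=c(\gamma)>0$; the constant $\beta_\gamma$ in formula~\eqref{def:beta} is then forced precisely as the threshold that makes the cross terms between $U^2$, $V^2$, $UV$ and the convolution remainder fit into a perfect-square (or sign-definite) combination. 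The quadratic $-c W^2$ inequality with initial datum $W(0)=u_0'(x_0)<0$ then yields $W(t)\to-\infty$ in finite time, and by the blowup scenario~\eqref{blowup-scena} this forces $T^*<\infty$. For the corollary one simply specializes $\gamma=1$, where the earlier remark records $\beta_1=1$, so that the hypothesis $u_0'(x_0)<-|u_0(x_0)|$ is exactly the instance of~\eqref{BRE} for the Camassa--Holm equation.

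The main obstacle I expect is the bookkeeping that pins down $\beta_\gamma$ and verifies the invariance of the region $\{u_x<-\beta_\gamma|u|\}$: one must handle the two cases $U\ge 0$ and $U\le 0$ (hence the \emph{two} families of Lyapunov functions), check that at the boundary $V=-\beta_\gamma|U|$ the vector field points inward, and control the nonlocal term $p*\bigl(\tfrac{3-\gamma}{2}u^2+\tfrac{\gamma}{2}u_x^2\bigr)$ and the potentially bad term $\tfrac{3-\gamma}{2}U^2$ uniformly. A clean way to organize this is to note $0\le p*g \le \tfrac12\|g\|_{L^1}$ and, more importantly, the pointwise bound $p*g \ge 0$ together with $|\partial_x p * g|\le p*g$ when $g\ge 0$, which for $1\le\gamma\le 3$ makes every nonlocal contribution cooperative; the range $3<\gamma\le 4$ is the delicate one and is presumably where the precise value of $\beta_\gamma$ (with $\beta_3=0$, $\beta_4=\tfrac12$) comes from balancing a genuinely signed-wrong term. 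Once the inward-pointing property is established, the finite-time blowup is immediate from integrating $\dot W\le -cW^2$.
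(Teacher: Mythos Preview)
Your overall strategy---track $u_x\pm\beta_\gamma u$ along the flow, show that the cone $\{u_x<-\beta_\gamma|u|\}$ is invariant, and then extract a Riccati inequality---is the right one and is close to what the paper does. (The paper actually weights the two functionals by $e^{\pm\beta_\gamma q(t,x)}$, obtaining $A=e^{\beta_\gamma q}(\beta_\gamma u-u_x)$ and $B=e^{-\beta_\gamma q}(\beta_\gamma u+u_x)$, which are \emph{unconditionally} monotone; your unweighted version would also work, via a bootstrap, once the correct convolution estimate is in place.)

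The real gap is the convolution estimate. The bounds you invoke, namely $p*g\ge0$ and $|\partial_xp*g|\le p*g$, are too weak to yield $\beta_1=1$. For $\gamma=1$ they give only
\[
\tfrac{d}{dt}\bigl(u_x+u\bigr)(t,q(t,x_0))\;\le\; u^2-\tfrac12 u_x^2,
\]
and the right-hand side is \emph{positive} on the boundary $u_x=-|u|$, so the invariance fails; pushing this through would at best recover a Constantin--Escher type condition $u_0'(x_0)<-\sqrt2\,|u_0(x_0)|$, not the sharp local one. What is actually needed (and is the paper's Lemma~3.1) is the pointwise lower bound
\[
(p\pm\beta\,\partial_xp)*\Bigl(\tfrac{3-\gamma}{2}u^2+\tfrac{\gamma}{2}u_x^2\Bigr)\;\ge\;\delta\,u^2,
\qquad \delta=\tfrac{\sqrt\gamma}{4}\bigl(\sqrt{12-3\gamma}-\sqrt\gamma\bigr),\quad 0\le\beta\le1,
\]
obtained by splitting $p=p\mathbf{1}_{\R^+}+p\mathbf{1}_{\R^-}$, using $a^2u^2+u_x^2\ge 2auu_x$ on each half-line, and integrating by parts; the optimal $a$ solves $a^2+a=(3-\gamma)/\gamma$. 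For $\gamma=1$ this gives $\delta=\tfrac12$, i.e.\ $p*(u^2+\tfrac12u_x^2)\ge\tfrac12 u^2$, and precisely this extra $\tfrac12u^2$ converts your $u^2-\tfrac12u_x^2$ into $\tfrac12(u^2-u_x^2)$, making the vector field point inward on the cone boundary and forcing $\beta_\gamma$ to be exactly~\eqref{def:beta}.

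A secondary point: the step ``$\dot W\le -cW^2$, integrate'' is not immediate. From the invariance you only get $\dot V\le\tfrac{\gamma}{2}(\beta_\gamma^2U^2-V^2)$ with a right-hand side that is negative but possibly tiny. The paper first uses the monotonicity of $A,B$ to get $\dot V\le\tfrac{\gamma}{2}A(0)B(0)<0$, forcing $V$ to decrease at a linear rate; once $V^2$ exceeds $\beta_\gamma^2\|u_0\|_{H^1}^2/2\ge\beta_\gamma^2 U^2$ by a fixed margin, the genuine Riccati inequality $\dot V\le -\tfrac{\gamma}{4}V^2$ takes over.
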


\begin{figure}
\label{figure1}
\includegraphics[width=10cm,height=7cm]{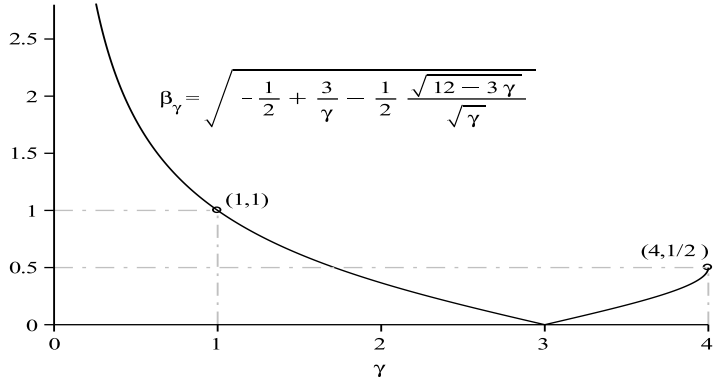}{\centering}
\caption{The plot of $\beta_\gamma$}
\end{figure}

\begin{remark}[The periodic case]
The above results extend to periodic solutions, with the same restrictions on~$\gamma$ and the same constant $\beta_\gamma$:
if $u_0\in H^s(\T)$ with $s>3/2$ and 
\begin{equation}
\label{torus-bu}
\inf_{x\in\T}(u_0'+\beta_\gamma|u_0|)(x)<0,
\end{equation}
then the corresponding solution defined on the torus blows up.
In particular, in the case of the periodic Camassa--Holm equation, wave breaking occurs as soon as 
 $u_0'(x_0)<-|u_0(x_0)|$ at some point $x_0\in \T$.
However, contrary to the case of the whole real line, we expect that some improvements on the expression of~$\beta_\gamma$ should be possible in the case of the torus: finding the best coefficient~$\beta_\gamma$ would require combining ideas of the present paper with the variational techniques used {\it e.g.\/} in~\cite{Wah-NoDEA07}.
\end{remark}

In this paper we will not discuss the continuation of the solution 
after the formation of a singularity. In general, there are several way to continue  solutions
beyond the wave breaking and it is possible to obtain weak solutions that conserve or that dissipate
the energy $\|u\|_{H^1}$. This issue is extensively studied, {\it e.g.\/}  in \cite{BreCon07}, 
\cite{BreCon07bis}, \cite{CocHK05}, \cite{HolJDE07}, \cite{HolJDE07bis}, \cite{HolJDE09}.
On the other hand, we know of no global existence results for strong solutions to equation~\eqref{rod}, excepted for the particular cases $\gamma=0$ or $\gamma=1$ (see, {\it e.g.\/}, \cite{BBM72}, \cite{ConstAIF00}, \cite{McKean04}) and for the examples of smooth solitary waves constructed in~\cite{Dai-Huo} for $0<\gamma<1$.

\subsection{Earlier results on wave breaking for Camassa--Holm}
\label{refsCH}

In their pioneering paper~\cite{CamHol93}, R.~Camassa and L.~Holm announced that if $u_0'(x_0)$ is negative with
$|u'_0(x)|$  sufficiently large, then the solution of the Camassa--Holm equation must lead to a wave breaking.
Several subsequent theorems confirmed their claim. The best known have been nicely surveyed by L. Molinet~\cite{Mol04} and are the following. In the theorems below, the word ``solution'' must be understood as in Corollary~\ref{co:blow}. In the first two theorems the  assumption $u_0\in H^3(\R)$ could be relaxed to $u_0\in H^s(\R)$, $s>3/2$, as noted in \cite{Mol04}.  

\begin{theorem}[Camassa, Holm, Hyman \cite{CamHolHym94}. The more precise formulation below is taken from~\cite{ConEschPisa}]
\label{th:CHH}
If $u_0\in H^3(\R)$, $u_0$ is \emph{odd} and $u_0'(0)<0$, then the corresponding solution of the Camassa--Holm equation blows up in finite time.
\end{theorem}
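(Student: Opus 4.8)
The plan is to run the classical wave-breaking argument, which in fact also drops out of Theorem~\ref{th:blow}: since an odd $u_0\in H^3(\R)\subset C^1(\R)$ satisfies $u_0(0)=0$, the hypothesis $u_0'(0)<0$ is literally $u_0'(0)<-|u_0(0)|=-\beta_1|u_0(0)|$ (recall $\beta_1=1$), so Theorem~\ref{th:blow} with $\gamma=1$ gives $T^*<\infty$ at once. As we are quoting this statement only to motivate Theorem~\ref{th:blow}, it is nonetheless worth recalling the direct proof, which proceeds by propagating the odd symmetry and then deriving a Riccati inequality at the origin.

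First I would propagate the symmetry. The reflected function $\tilde u(t,x):=-u(t,-x)$ solves the same Cauchy problem \eqref{eq-ch}: this uses only that $p$ is even, that $\partial_x$ anticommutes with $x\mapsto -x$, and that $u^2+\tfrac12 u_x^2$ transforms as an even function when $u$ transforms as an odd one; moreover $\tilde u(0,\cdot)=-u_0(-\cdot)=u_0$. By the uniqueness part of the well-posedness theory quoted above, $u(t,\cdot)$ is odd for every $t\in[0,T^*)$, so $u(t,0)=0$ and, since $u_{xx}(t,\cdot)$ is then odd, $u_{xx}(t,0)=0$.

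Next I would set $h(t):=u_x(t,0)$ and find a differential inequality for it. Differentiating \eqref{eq-ch} in $x$ and using $\partial_x^2(p*f)=p*f-f$ (a consequence of $(1-\partial_x^2)p=\delta_0$) gives
\[
u_{tx}=u^2-uu_{xx}-\tfrac12 u_x^2-p*\bigl(u^2+\tfrac12 u_x^2\bigr).
\]
Evaluating at $x=0$, using $u(t,0)=u_{xx}(t,0)=0$ and $p*(u^2+\tfrac12 u_x^2)\ge0$ (as $p\ge0$), we obtain $h'(t)\le-\tfrac12 h(t)^2$. Since $h(0)=u_0'(0)<0$, the function $h$ stays negative and, integrating $\tfrac{d}{dt}(1/h)\ge\tfrac12$, one finds $1/h(t)\ge 1/u_0'(0)+t/2$; the right-hand side reaches $0$ at $t=2/|u_0'(0)|$, which is incompatible with $h$ remaining finite and negative, so $h(t)\to-\infty$ at some time $t\le 2/|u_0'(0)|$. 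Hence $\inf_{x\in\R}u_x(t,x)\to-\infty$, and the blowup scenario \eqref{blowup-scena} with $\gamma=1$ forces $T^*<\infty$.

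The only real obstacle is regularity: for $u_0\in H^3$ the solution is smooth enough that $u_{tx}(t,0)$ has a classical meaning and all the pointwise manipulations above are licit, but to reach the sharp threshold $s>3/2$ mentioned in the text one should run the computation on the mollified equation and pass to the limit, keeping track of the positivity of the convolution term throughout; the symmetry-propagation step is routine once uniqueness is in hand.
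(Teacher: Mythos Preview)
Your proposal is correct and follows essentially the same approach as the paper: the paper's proof sketch (given just after the statement) likewise propagates the odd symmetry to obtain $u_{xx}(t,0)=0$, derives the Riccati inequality $\frac{\dd}{\dd t}[u_x(t,0)]\le -\frac12 u_x(t,0)^2$, and invokes the blowup scenario. Your opening observation that the result is an immediate consequence of Theorem~\ref{th:blow} via $u_0(0)=0$ is also exactly how the paper subsumes this criterion under Corollary~\ref{co:blow}.
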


The proof of the above theorem relies on the fact that the solution remains odd for all time, 
implying that $x\mapsto u_{xx}(t,x)$ is also odd and thus $u_{t,xx}(t,0)=0$ for all~$t\in[0,T)$.
One then deduces the differential inequality $\frac{\dd }{\dd t} [u_x(t,0)] \le -\frac{1}{2} u_x(t,0)^2$, leading
to the condition of blowup scenario. 

The next theorem has been systematically adapted to the many different generalizations of the Camassa--Holm equation.
It was anticipated by~\cite{CamHol93}, \cite{CamHolHym94}, where one finds the idea of obtaining a differential inequality similar to the previous one for $m(t)=u_x(t,\bar x(t))$, where $\bar x(t)\in\R$ is a point where $u_x(\cdot,t)$ attains its minimum. The main technical difficulty consists in proving that the map $t\mapsto m(t)$ is a.e. differentiable.
This subtle point was successfully addressed by A.~Constantin and J.~Escher. The resulting criterion reads as follows:

\begin{theorem}[Constantin, Escher  \cite{ConEschActa}]
\label{th:CE}
Let $u_0\in H^3(\R)$ be such that, at some point $x_0\in\R$, $u'(x_0)<-\|u_0\|_{H^1}/\sqrt2$. Then the solution of the Camassa--Holm equation blows up in finite time.
\end{theorem}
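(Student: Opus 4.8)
The plan is to follow the solution along the characteristic issued from the point~$x_0$ and to reduce the Camassa--Holm equation~\eqref{eq-ch} to a scalar Riccati differential inequality whose coefficients are controlled by the conserved energy; this is essentially the $\gamma=1$ instance of the method used later for Theorem~\ref{th:blow}, and it avoids the ``moving minimum point'' technique of the original proof. Concretely: by the local well-posedness recalled above (here $s=3$) we have $u\in C([0,T^*),H^3)\cap C^1([0,T^*),H^2)$, the $H^1$-energy $E(u)=\|u\|_{H^1}^2$ is conserved, and $H^1(\R)\hookrightarrow L^\infty(\R)$ gives $\|u(t,\cdot)\|_\infty^2\le\tfrac12\|u(t,\cdot)\|_{H^1}^2=\tfrac12\|u_0\|_{H^1}^2$ for every~$t$. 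Let $q(t)$ solve $\dot q(t)=u(t,q(t))$, $q(0)=x_0$; since $u(t,\cdot)\in C^2$ this flow is well defined on $[0,T^*)$. Put $v(t)=u(t,q(t))$ and $w(t)=u_x(t,q(t))$, so $w(0)=u_0'(x_0)$.

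Differentiating~\eqref{eq-ch} once in~$x$ and using $\partial_x^2 p=p-\delta_0$ — which makes the only low-regularity term $uu_{xx}$ cancel inside $u_{xt}+uu_{xx}$ — one obtains, along the characteristic,
\begin{equation*}
\dot w=-\tfrac12 w^2+v^2-\Bigl(p*\bigl(u^2+\tfrac12 u_x^2\bigr)\Bigr)(t,q(t)).
\end{equation*}
From $p(x)=\tfrac12 e^{-|x|}$, integration by parts (completing the square in $u\pm u_x$) yields the pointwise identity
\begin{equation*}
\Bigl(p*\bigl(u^2+\tfrac12 u_x^2\bigr)\Bigr)(x)=\tfrac12 u(x)^2+\tfrac14\!\int_x^{\infty}\! e^{x-y}(u+u_x)^2(y)\dd y+\tfrac14\!\int_{-\infty}^{x}\! e^{y-x}(u-u_x)^2(y)\dd y\ \ge\ \tfrac12 u(x)^2,
\end{equation*}
so, also using $v^2\le\tfrac12\|u_0\|_{H^1}^2$,
\begin{equation*}
\dot w\ \le\ -\tfrac12 w^2+v^2-\tfrac12 v^2\ =\ -\tfrac12 w^2+\tfrac12 v^2\ \le\ -\tfrac12 w^2+\tfrac14\|u_0\|_{H^1}^2 .
\end{equation*}

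Write $K:=\tfrac14\|u_0\|_{H^1}^2$, so the hypothesis $u_0'(x_0)<-\|u_0\|_{H^1}/\sqrt2$ reads $w(0)<-\sqrt{2K}$, i.e. $\theta:=w(0)^2/(2K)>1$. As long as $w<-\sqrt{2K}$ one has $\dot w\le-\tfrac12 w^2+K<0$, so $w$ is strictly decreasing and stays in $(-\infty,-\sqrt{2K})$; then $w(t)^2\ge w(0)^2=2\theta K$, hence $K\le w(t)^2/(2\theta)$ and
\begin{equation*}
\dot w\ \le\ -\tfrac12 w^2+\tfrac{1}{2\theta}w^2\ =\ -\tfrac{\theta-1}{2\theta}\,w^2\ =:\ -c\,w^2,\qquad c>0 .
\end{equation*}
Integrating this Riccati inequality, $w(t)\le\bigl(w(0)^{-1}+ct\bigr)^{-1}\to-\infty$ as $t\uparrow t_1:=1/(c\,|w(0)|)$. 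Since $u_x(t,\cdot)$ stays bounded on every compact subinterval of $[0,T^*)$, the solution cannot reach $t_1$, so $T^*\le t_1<\infty$; equivalently $\inf_{x\in\R}u_x(t,x)\le w(t)\to-\infty$, which is the blowup scenario~\eqref{blowup-scena} in the case $\gamma=1$.

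I expect the real work to be hidden in two places. The first is the pointwise estimate $p*(u^2+\tfrac12 u_x^2)\ge\tfrac12 u^2$: although it becomes an identity once one writes $u^2+\tfrac12 u_x^2=\tfrac12 u^2\mp uu_x+\tfrac12(u\pm u_x)^2$ and integrates against the two one-sided exponential tails of~$p$, choosing the \emph{right} decomposition is the crux (and it is the $\gamma=1$ case of the convolution estimates that Theorem~\ref{th:blow} will need). The second is the rigorous status of $\dot w=(u_{xt}+uu_{xx})(t,q(t))$; this is painless for $u_0\in H^3$ — then $u\in C([0,T^*),C^2)$ and $u_t\in C([0,T^*),C^1)$, so $u_x$ is jointly $C^1$ and the chain rule applies directly — but it is precisely the point where the \emph{classical} proof of Constantin and Escher, which follows $m(t)=\min_{x}u_x(t,\cdot)=u_x(t,\bar x(t))$ and uses $u_{xx}(t,\bar x(t))=0$ to get the same inequality for $m$, must invoke the local Lipschitz continuity of $t\mapsto m(t)$ and the a.e.\ validity of $m'(t)=u_{xt}(t,\bar x(t))$. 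Following the single characteristic through the point~$x_0$ where the hypothesis is imposed bypasses that subtlety altogether.
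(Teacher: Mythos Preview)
Your proof is correct and is essentially the same argument the paper gives in Section~\ref{sec-erod} as a ``direct and simple proof'' of criterion~\eqref{zhocr}, specialized to $\gamma=1$ (where $\beta_1=1$): follow the characteristic through~$x_0$, use the convolution estimate $p*(u^2+\tfrac12 u_x^2)\ge\tfrac12 u^2$ (your explicit identity is the $\gamma=1$, $\beta=0$ instance of Lemma~\ref{lem:zhou}), combine it with $\|u\|_\infty^2\le\tfrac12\|u_0\|_{H^1}^2$, and integrate the resulting Riccati inequality $\dot w\le-\tfrac12 w^2+\tfrac14\|u_0\|_{H^1}^2$. The paper also notes (Section~\ref{refsCH}) that the result follows even more cheaply from Corollary~\ref{co:blow} together with the same Sobolev embedding.
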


A third interesting blow up criterion is provided by the next theorem.

\begin{theorem}[Constantin, \cite{ConstAIF00}]
\label{th:C}
Let $u_0\in H^3(\R)$ be such that the associated potential $y_0=u_0-(u_0)_{xx}$ changes sign
and satisfies, for some $x_0\in\R$, $y(x)\ge0$ for $x\le x_0$ and $y_0(x)\le0$ for $x\ge x_0$.
Then the solution of the Camassa--Holm equation blows up in finite time.
\end{theorem}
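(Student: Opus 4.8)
The plan is to deduce Theorem~\ref{th:C} directly from Corollary~\ref{co:blow}, in the spirit of the whole paper: the local-in-space criterion should subsume the potential-based ones. Concretely, I would show that the sign hypothesis on $y_0=u_0-(u_0)_{xx}$ forces, at the \emph{same} point $x_0$, the strict pointwise inequality $u_0'(x_0)<-|u_0(x_0)|$. Once this is in hand, Corollary~\ref{co:blow} gives $T^*<\infty$ with no further work.

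The key ingredient is an explicit representation of $u_0=p*y_0$ obtained by splitting the convolution at $x_0$. Since $p(x)=\tfrac12 e^{-|x|}$, for every $x$ one has
\[
u_0(x)=\tfrac12 e^{-x}\!\int_{-\infty}^{x} e^{z}y_0(z)\dd z+\tfrac12 e^{x}\!\int_{x}^{\infty} e^{-z}y_0(z)\dd z=:\tfrac12 e^{-x}A_0(x)+\tfrac12 e^{x}B_0(x).
\]
Differentiating (the boundary terms coming from $A_0'(x)=e^{x}y_0(x)$ and $B_0'(x)=-e^{-x}y_0(x)$ cancel) gives $u_0'(x)=-\tfrac12 e^{-x}A_0(x)+\tfrac12 e^{x}B_0(x)$, whence the two identities
\[
u_0(x)-u_0'(x)=e^{-x}A_0(x),\qquad u_0(x)+u_0'(x)=e^{x}B_0(x).
\]
The exponential weights guarantee convergence of $A_0,B_0$, since $y_0\in H^1(\R)\hookrightarrow C_0(\R)$ is bounded.

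It then remains to read off the signs at $x_0$. The hypothesis $y_0\ge0$ on $(-\infty,x_0]$ gives $A_0(x_0)\ge0$, and $y_0\le0$ on $[x_0,\infty)$ gives $B_0(x_0)\le0$; moreover both inequalities are \emph{strict}. Indeed, $y_0$ is continuous and changes sign, while $y_0(x_0)=0$ (it is simultaneously $\ge0$ and $\le0$ there), so $y_0>0$ on an open subset of $(-\infty,x_0)$ and $y_0<0$ on an open subset of $(x_0,\infty)$; as no cancellation occurs in either one-sided integral, $A_0(x_0)>0$ and $B_0(x_0)<0$. Substituting into the identities yields $u_0(x_0)-u_0'(x_0)>0$ and $u_0(x_0)+u_0'(x_0)<0$, i.e. $u_0'(x_0)<u_0(x_0)$ and $u_0'(x_0)<-u_0(x_0)$, that is $u_0'(x_0)<-|u_0(x_0)|$. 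Corollary~\ref{co:blow} now applies and forces $T^*<\infty$.

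I expect the only delicate point to be the strictness in the previous paragraph: it hinges on reading ``$y_0$ changes sign'' as the existence of points where $y_0$ is genuinely positive and genuinely negative, which—combined with the monotone sign pattern—localizes the positive (resp.\ negative) part strictly to the left (resp.\ right) of $x_0$ and rules out the degenerate equality $u_0'(x_0)=-|u_0(x_0)|$ that Corollary~\ref{co:blow} does not cover. Everything else is the routine splitting of the convolution kernel. (A self-contained route is of course available—it is Constantin's original argument in \cite{ConstAIF00}, which transports the sign structure of $y$ along the flow $q_t=u(t,q)$ via $y(t,q)q_x^2=y_0$ and derives a differential inequality for the slope at the moving separation point—but the reduction above is shorter and exhibits Theorem~\ref{th:C} transparently as a special case of the main result.)
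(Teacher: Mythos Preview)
Your proposal is correct and follows essentially the same route as the paper: the paper derives Theorem~\ref{th:C} from Corollary~\ref{co:blow} via the identities~\eqref{u-u'}--\eqref{u+u'}, which are exactly your $u_0-u_0'=e^{-x}A_0$ and $u_0+u_0'=e^{x}B_0$. Your treatment is in fact more complete, since you spell out the strictness (using continuity of $y_0\in H^1\hookrightarrow C_0$ and the sign-change hypothesis to get $A_0(x_0)>0$, $B_0(x_0)<0$), which the paper leaves implicit.
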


Notice that these three criteria are \emph{non-local} in the sense
explained in the introduction. 
Moreover, despite a few common features, none of them implies the other two.
On the other hand, Corollary~\ref{co:blow} clearly unifies the three above results:
The blowup criterion of Camassa, Holm and Hyman is considerably relaxed: for example, one can replace the antisymmetry condition
on~$u_0$ by the simpler requirement $u_0(0)=0$.
On the other hand, it follows from Corollary~\ref{co:blow} that solutions arising from small perturbations of a nonzero antisymmetric initial datum always blow up.

The criterion by Constantin and Escher follows immediately from the Sobolev embedding inequality
 \[
 \|u\|_{L^{\infty}}\le  \|u_0\|_{H^1}/\sqrt2. 
 \]
The blowup  criterion of Constantin~\cite{ConstAIF00} is deduced from Corollary~\ref{co:blow} applying the elementary identities
(just apply integration by parts):
\begin{equation}
\label{u-u'}
(u_0-u_0')(x)=e^{-x}\int_{-\infty}^x e^\xi y_0(\xi)\dd\xi,
\end{equation}
\begin{equation}
\label{u+u'}
(u_0+u_0')(x)=e^x\int_x^{+\infty} e^{-\xi} y_0(\xi)\dd\xi.
\end{equation}

There is a fourth  blowup criterion, due to Y. Zhou \cite{Zhou2004}, that slightly extends Constantin's theorem.
Zhou's theorem affirms that the solution of the Camassa--Holm equation blows up provided
that there exists $x_1\in\R$ such that 
\[ 
y(x_1)=0,\qquad
\int_{-\infty}^{x_1} e^\xi y_0(\xi)\dd\xi>0\qquad
\text{and}
\qquad
\int_{x_1}^{+\infty} e^{-\xi} y_0(\xi)\dd\xi<0.\]
Using identities~\eqref{u-u'}-\eqref{u+u'} shows that Corollary~\ref{th:blow} encompasses also Zhou's criterion.
In fact, the corollary essentially tells us that  the above restriction $y(x_1)=0$ can be dropped.

On the other hand, using some properties that are specific to the case $\gamma=1$ (in particular the fact that 
the Camassa--Holm equation has a bi-hamiltonian structure and  can be integrated), H. McKean succeded in providing a necessary and sufficient condition for wave-breaking \cite{McKean04}.  His theorem  asserts that,
if $u_0\in H^3(\R)$ and $y_0\in L^1(\R)$, then the solution of the Camassa--Holm equation will develop a singularity in finite time \emph{if and only if} $y_0$ satisfies the following sign condition:
\[
\exists\, x_1<x_2\quad\text{such that}\quad y_0(x_1)>0>y_0(x_2).
\]

Notice that the assumptions of Corollary~\ref{co:blow} are slighlty more general  than that of McKean's theorem.
However, the conclusion of Corollary~\ref{co:blow} is less precise, because it provides a sufficient condition that is no longer necessary. On the other hand, the crucial advantage of our approach is that it makes use of few properties of the Camassa--Holm equation. For this reason our theorem, unlike McKean's, remains valid also in the case  $\gamma>1$. Moreover, it seems hopeful that the proof of Theorem~\ref{th:blow} can be adapted to a wide class of physically relevant equations arising in shallow water theory. We will briefly discuss a few possible extensions in the last section.

\subsection{Main idea of the proof}

One first reduces to the case $u_0\in H^3(\R)$ by approximation. 
As in~\cite{ConstAIF00}, \cite{McKean98},
the starting point is the analysis of the flow map $q(t,x)$, defined by 
\begin{equation}
\label{flow}
\begin{cases}
q_t(t,x)=\gamma u(t,q(t,x)), &t\in(0,T),\; x\in\R,\\
q(0,x)=x.
\end{cases}
\end{equation}
One easily checks that $q\in C^1([0,T)\times\R,\R)$ and $q_x(t,x)>0$ for all~$t\in(0,T)$ and $x\in\R$. 
In the case $\gamma=1$, the potential $y=u-u_{xx}$ satisfies the fundamental identity
\[
 y(t,q(t,x))\bigl(q_x(t,x)\bigr)^2=y_0(x), \qquad t\in[0,T),\quad x\in\R \qquad(\gamma=1),
\]
implying that the zeros and the sign of $y$ are invariant under the flow.
This nice property of~$y$ is an essential feature of the Camassa--Holm.
It implies a few remarkable algebraic identities~(see~\cite{McKean98}),
and can be used not only to obtain wave breaking criteria, but also global existence results, \cite{ConEschActa}, \cite{ConstAIF00}, \cite{McKean04}.

For $\gamma\not=1$ the zeros and the sign of $y$ are no longer invariant of the flow.  
This explains why global existence results are much more difficult to establish in this case.
Indeed, the above identity extends as follows (for $t\in[0,T)$ and $x\in\R$)
\begin{equation*}
y(t,q(t,x))\bigl(q_x(t,x)\bigr)^2=y_0(x)
+3(\gamma-1)\Bigl(\int_0^t (uu_x)(s,q(s,x))q_x(s,x)^2\dd s \Bigr). 
\end{equation*}
Such formula is obtained from the previous one by applying the ``variation of the constants". It can also be  checked directly by computing the total time derivative.
But it turns out that this identity is not as useful as for  $\gamma=1$.

The essential idea of the proof of Theorem~\ref{th:blow} is then to find two constants $\alpha$ and $\beta$
such that, for all $x\in\R$, the functions 
\[t\mapsto e^{\alpha q(t,x)}(\beta u-u_x)(t,q(t,x))\]
are monotone non-decreasing on $[0,T)$, and
the functions 
\[t\mapsto e^{-\alpha q(t,x)}(\beta u+u_x)(t,q(t,x))\]
are non-increasing.
Such monotonicity properties will be convenient substitutes of McKean's algebraic identities~\cite{McKean98}
established for the Camassa--Holm equation.


\section{Two families of Lyapunov functions}

\begin{proof}[Proof of Theorem~\ref{th:blow}]
Let us first consider the case $u_0\in H^3(\R)$. As in \cite{ConstAIF00}, we shall look for a differential inequality for $\frac{\dd}{\dd t}u_x(t,q(t,x))$.
Recalling that $\partial^2_xp*f=p*f-f$, differentiating equation~\eqref{rod} with respect to the~$x$ variable
yields
\begin{equation}
\label{eq:rodx}
u_{tx}+\gamma uu_{xx}=\frac{3-\gamma}{2}u^2-\frac{\gamma}{2}u_x^2-p*\rodnl.
\end{equation}

For $0\le\gamma\le4$, let $\delta=\delta(\gamma)$ given by
\begin{equation}
\label{def:delta}
\delta=\frac{\sqrt\gamma}{4}\bigl(\sqrt{12-3\gamma}-\sqrt \gamma\bigr).
\end{equation}
Notice that for $0\le\gamma\le3$, then $\delta\ge0$ was characterized by Y.~Zhou~\cite{Zhou-Math-Nachr} as the best constant satisfying the inequality
\[
  p*\rodnl\ge \delta u^2.
\]
We will extend this inequality also to the case $3\le \gamma\le 4$ (in this case $\delta\le0$). 
In fact,  the next lemma proves more than this:
\begin{lemma}
\label{lem:zhou}
Let $0\le\gamma\le4$, $0\le\beta\le1$ and $\delta$ as in~\eqref{def:delta}.
Then
\begin{equation}
\label{est:rod1}
(p\pm\beta\partial_xp)*\rodnl\ge\delta u^2.
\end{equation}
\end{lemma}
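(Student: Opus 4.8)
\emph{Proof proposal.} The plan is to reduce \eqref{est:rod1} to a sharp one-sided (half-line) inequality of Hardy type --- a half-line strengthening of Y.~Zhou's estimate $p*\rodnl\ge\delta u^2$ --- and then reassemble. By translation invariance it suffices to prove \eqref{est:rod1} at $x=0$, and by the reflection $x\mapsto-x$ (under which $p$ is unchanged, $\partial_xp$ changes sign, and the class of admissible right-hand sides $\rodnl$ is preserved, since $u_x^2$ is even under $u(\cdot)\mapsto u(-\cdot)$) the case with the minus sign reduces to the case with the plus sign. Writing $\partial_xp(x)=-\tfrac12\operatorname{sgn}(x)e^{-|x|}$ and splitting the convolution at the origin, the left-hand side of \eqref{est:rod1} at $x=0$, with the plus sign, equals
\[
\frac{1+\beta}{2}\int_0^{\infty} e^{-y}\,\rodnl\dd y+\frac{1-\beta}{2}\int_{-\infty}^{0} e^{y}\,\rodnl\dd y ,
\]
where $u=u(y)$, $u_x=u_x(y)$. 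So everything comes down to bounding each of these two integrals from below by $\delta\,u(0)^2$.

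The heart of the matter is the half-line estimate: for every sufficiently smooth $v$ on $[0,\infty)$ with $v,v'$ vanishing at $+\infty$,
\[
\int_0^{\infty} e^{-y}\bigl(\tfrac{3-\gamma}{2}\,v^2+\tfrac{\gamma}{2}\,(v')^2\bigr)\dd y\;\ge\;\delta\,v(0)^2
\]
(for $\gamma=0$ the $v'$ term is absent and $\delta=0$, so this is trivial; assume $\gamma>0$). I would prove it by completing the square: since $\gamma\ge0$, for every $\lambda\in\R$
\[
0\;\le\;\int_0^{\infty} e^{-y}\,\tfrac{\gamma}{2}\,(v'+\lambda v)^2\dd y ,
\]
and, expanding the square and using $\int_0^{\infty}e^{-y}vv'\dd y=-\tfrac12 v(0)^2+\tfrac12\int_0^{\infty}e^{-y}v^2\dd y$ (integration by parts, the boundary term at $+\infty$ vanishing), one obtains
\[
\int_0^{\infty} e^{-y}\bigl(\tfrac{3-\gamma}{2}v^2+\tfrac{\gamma}{2}(v')^2\bigr)\dd y\;\ge\;\frac{\gamma\lambda}{2}\,v(0)^2+\frac{3-\gamma-\gamma\lambda-\gamma\lambda^2}{2}\int_0^{\infty}e^{-y}v^2\dd y .
\]
Now take $\lambda$ to be the larger root of $\gamma\lambda^2+\gamma\lambda+(\gamma-3)=0$, namely $\lambda=\frac{-\gamma+\sqrt{12\gamma-3\gamma^2}}{2\gamma}$ (the discriminant $3\gamma(4-\gamma)$ is nonnegative precisely because $\gamma\in[0,4]$). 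For this $\lambda$ the coefficient of $\int_0^{\infty}e^{-y}v^2\dd y$ vanishes identically, while $\frac{\gamma\lambda}{2}=\frac{\sqrt\gamma}{4}\bigl(\sqrt{12-3\gamma}-\sqrt\gamma\bigr)=\delta$, which is exactly the asserted bound; choosing $\lambda$ this way is optimal, since it makes $\delta v(0)^2$ the minimum of the half-line quadratic form at fixed $v(0)$.

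Finally I would apply the half-line estimate twice. With $v=u|_{[0,\infty)}$ it gives $\int_0^{\infty}e^{-y}\rodnl\dd y\ge\delta u(0)^2$; with $v(z)=u(-z)$ --- so that $v(0)=u(0)$ and $v'(z)^2=u_x(-z)^2$ --- it gives, after the substitution $y=-z$, $\int_{-\infty}^{0}e^{y}\rodnl\dd y\ge\delta u(0)^2$. Since $0\le\beta\le1$ the two weights $\tfrac{1\pm\beta}{2}$ are nonnegative, so multiplying each bound by its weight and adding yields
\[
\bigl((p+\beta\partial_xp)*\rodnl\bigr)(0)\;\ge\;\Bigl(\tfrac{1+\beta}{2}+\tfrac{1-\beta}{2}\Bigr)\delta\,u(0)^2=\delta\,u(0)^2 ;
\]
then \eqref{est:rod1} follows for all $x$ by translation and for the minus sign by the reflection noted above, and a density argument extends it to the relevant Sobolev class. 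The only genuinely delicate point is pinning down the sharp constant $\delta$ through the optimal choice of $\lambda$; once the completion-of-squares identity is written down this is a short computation, and the rest is bookkeeping.
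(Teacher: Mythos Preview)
Your proof is correct and follows essentially the same approach as the paper: both split the kernel $p\pm\beta\partial_xp$ into two one-sided pieces with nonnegative weights $\tfrac{1\pm\beta}{2}$, establish the half-line inequality by the same completion of the square (the paper phrases it as $a^2u^2+u_x^2\ge 2auu_x$ and integrates by parts, with $a$ the larger root of $a^2+a=(3-\gamma)/\gamma$, which is exactly your $\lambda$), and then recombine. Your use of translation invariance and the reflection $x\mapsto-x$ to reduce cases is a cosmetic streamlining, but the substance is identical.
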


\begin{proof}
We denote by ${\bf 1}_{\R^+}$ and ${\bf 1}_{\R^-}$ the characteristic functions of $\R^+$ and $\R^-$ respectively. Let $a\in\R$:
 \begin{equation*}
 \begin{split}
 (p{\bf 1}_{\R^+})*(a^2u^2+u_x^2)(x)
 &=\frac{e^{-x}}{2}\int_{-\infty}^x e^\xi(a^2u^2+u_x^2)(\xi)\dd\xi\\
 &\ge a e^{-x}\int_{-\infty}^x e^\xi uu_x\dd\xi\\
 &=\frac{a u^2(x)}{2}-\frac{a e^{-x}}{2}\int_{-\infty}^x e^\xi u^2\dd\xi\\
 &=\frac{a u^2(x)}{2}-a(p{\bf 1}_{\R^+})*(u^2)(x).
 \end{split}
 \end{equation*}
This leads to
\[
(p{\bf 1}_{\R^+})*\bigl((a^2+a)u^2+u_x^2\bigr)\ge \frac{a}{2} u^2.
\]
Choosing $a$ to be the largest real root of the second order equation ($a$ will be negative for $3<\gamma\le4$)
\[
a^2+a={(3-\gamma)}/{\gamma}
\]
we get $\gamma a=2\delta$, hence 
\[
 (p{\bf 1}_{\R^+})*\rodnl\ge \textstyle\frac{\delta}{2} u^2.
\]
The same computations also show that
\[
 (p{\bf 1}_{\R^-})*\rodnl\ge \textstyle\frac{\delta}{2} u^2.
\]
We have, both in the a.e. and the distributional sense,
\[
 \begin{split}
  p-\beta\partial_x p&=(1-\beta)p{\bf 1}_{\R^-}+(1+\beta)p{\bf 1}_{\R^+},\\
 p+\beta\partial_x p&=(1+\beta)p{\bf 1}_{\R^-}+(1-\beta)p{\bf 1}_{\R^+}.
\end{split}
\]
For $0\le \beta\le1$, taking the linear combination in the two last inequalities implies
estimate~\eqref{est:rod1}.
\end{proof}

The optimality of the constant~$\delta$ in inequality~\eqref{est:rod1} follows by the fact that
 for $0<\gamma<3$, the equality holds  at the origin
when $a u=u_x$ in $(-\infty,0)$ and $-a u=u_x$ in $(0,\infty)$.
Here $a$ is the positive root of $a^2+a={(3-\gamma)}/{\gamma}$.
In other words, the choice $u(x)=e^{-a|x|}$ gives the equality in \eqref{est:rod1}
for $x=0$. When  $3<\gamma\le4$, then $a<0$ and one chooses $u(x)=e^{a|x|}$.

\medskip
Let $0<T\le T^*$.
Recalling that $u\in C^1([0,T),H^2)$, we see 
that $u$ and $u_x$ are continuous on $[0,T)\times\R$ and $x\mapsto u(t,x)$ is Lipschitz, uniformly
with respect to~$t$ in any compact time interval in $[0,T)$.
Then the flow map $q(t,x)$ is well defined by~\eqref{flow} in the whole time interval $[0,T)$
and $q\in C^1([0,T)\times\R,\R)$.

We have, for $0<\gamma\le4$,
\begin{equation}
\label{inneq}
 \begin{split}
 \frac{\dd }{\dd t}\bigl[u_x(t,q(t,x))\bigr]
 &=\bigl[u_{tx}+\gamma u u_{xx}\bigr](t,q(t,x))\\
 &=\textstyle\frac{3-\gamma}{2}u^2-\frac{\gamma}{2}u_x^2-p*\rodnl\\
 &\le \Bigl((\textstyle\frac{3-\gamma}{2}-\delta) u^2-\frac{\gamma}{2}u_x^2\Bigr)(t,q(t,x))\\
 &=\textstyle\frac{\gamma}{2}\bigl(\beta_\gamma^2 u^2-u_x^2\bigr)(t,q(t,x)),
 \end{split}
\end{equation}
where
\[
\beta_\gamma^2=\frac{3-\gamma}{\gamma}-\frac{2\delta}{\gamma}. 
\]
According to~\eqref{def:delta}, then we set
\begin{equation}
 \label{def:beta}
 \beta_\gamma=\biggl(
 -\frac{1}{2}+\frac{3}{\gamma}-\frac{\sqrt{12-3\gamma}}{2\sqrt \gamma}
 \biggr)^{1/2}.
\end{equation}
This expression shows that $\gamma\mapsto\beta_\gamma$ is continuous,  decreasing
on $(0,3]$, increasing for $\gamma\in[3,4]$. Moreover, $\beta_1=1$ and $\beta_3=0$ and $\beta_4=\frac{1}{2}$.

The next step is to find a good factorization of $(\beta_\gamma^2 u^2-u_x^2)(t,q(t,x))$ ensuring some monotonicity properties for each factor.
The obvious factorization $\beta_\gamma^2 u^2-u_x^2=(\beta_\gamma u-u_x)(\beta_\gamma u+u_x)$
is not the most interesting one, as it will be revealed by the the next proposition. 

This leads us to study the functions of the form:
\[
A(t,x)=e^{\alpha q(t,x)}(\beta u-u_x)(t,q(t,x)),
\]
and 
\[
B(t,x)=e^{-\alpha q(t,x)}(\beta u+u_x)(t,q(t,x)).
\]
where $0<\gamma\le4$, and $\alpha$ and $\beta$ are real constants to be chosen later.

Computing the derivatives with respect to~$t$ using the definition of the flow map~\eqref{flow} gives, for $\alpha\in\R$ and $0\le\beta\le1$,
\[
 \begin{split}
  A_t(t,x)
  &=e^{\alpha q(t,x)}
    \Bigl[
    \alpha\beta\gamma u^2-\alpha\gamma u u_x+\beta(u_t+\gamma uu_x)-(u_{tx}+\gamma uu_{xx})
    \Bigr]\\
   &=e^{\alpha q(t,x)}
   \Bigl[\Bigl(\alpha\beta\gamma-\textstyle\frac{3-\gamma}{2}\Bigl)u^2+\frac{\gamma}{2}u_x^2-\alpha\gamma uu_x +(p-\beta\partial_x p)*\rodnl
   \Bigr]\\
  &\ge e^{\alpha q(t,x)}
  \Bigl[\Bigl(\alpha\beta\gamma-\textstyle\frac{3-\gamma}{2}+\delta\Bigl)u^2
  -\alpha\gamma uu_x+\frac{\gamma}{2}u_x^2
   \Bigr].
 \end{split}
\]
Here we used in the second equality equations~\eqref{rod} and \eqref{eq:rodx}, next the 
inequality provided by Lemma~\ref{lem:zhou}.

The condition on the discriminant
\[
 \Delta=\alpha^2\gamma^2-2\gamma\Bigl(\alpha\beta\gamma-\textstyle\frac{3-\gamma}{2}+\delta\Bigr)\le0
\]
guarantees that the quadratic form inside the brackets is nonnegative.
Minimizing $\Delta$ with respect to~$\alpha$ gives $\alpha=\beta$. 
Choosing $\alpha=\beta$, the condition $\Delta\le0$ 
boils down to
\[
 \beta^2\ge \textstyle\frac{3-\gamma}{\gamma}-\frac{2\delta}{\gamma}=\beta_\gamma^2,
\]
where $\beta_\gamma$ is given by~\eqref{def:beta}.

In particular, choosing $\alpha=\beta=\beta_\gamma$ (this is indeed possible when $1\le\gamma\le4$ but
not when $0<\gamma<1$, 
as in the former case $0\le\beta\le1$ and Lemma~\ref{lem:zhou} can be applied) we get
\[ A_t(t,x)\ge0,\qquad\text{for all $t\in[0,T)$, $x\in\R$}.\]

Similarly, for $\alpha\in\R$ and $0\le\beta\le1$,
\[
 \begin{split}
  B_t(t,x)
   &=-e^{-\alpha q(t,x)}
   \Bigl[\Bigl(\alpha\beta\gamma-\textstyle\frac{3-\gamma}{2}\Bigl)u^2+\frac{\gamma}{2}u_x^2+\alpha\gamma uu_x +(p+\beta\partial_x p)*\rodnl
   \Bigr]\\
  &\le -e^{-\alpha q(t,x)}
  \Bigl[\Bigl(\alpha\beta\gamma-\textstyle\frac{3-\gamma}{2}+\delta\Bigl)u^2+\alpha\gamma uu_x
+\frac{\gamma}{2}u_x^2   \Bigr].
 \end{split}
\]
The condition guaranteeing that the quadratic form inside the brackets is non-negative is identical
to the previous one. 

Summarizing, we established the following fundamental proposition:
\begin{proposition}
\label{prop:liapu}
Let $1\le\gamma\le4$,  $u$ as in Theorem~\ref{th:blow}
and $\beta_\gamma$ as in~\eqref{def:beta}.
Set 
\begin{equation*}
\begin{split}
A(t,x)&=e^{\beta_\gamma q(t,x)}(\beta_\gamma u-u_x)(t,q(t,x)),\\
B(t,x)&=e^{-\beta_\gamma q(t,x)}(\beta_\gamma u+u_x)(t,q(t,x)).
\end{split}
\end{equation*}
Then, for all~$x\in\R$, 
the function $t\mapsto A(t,x)$ is monotonically increasing and $t\mapsto B(t,x)$ is decreasing.
\end{proposition}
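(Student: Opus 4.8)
The plan is to prove both monotonicity statements by differentiating $A$ and $B$ along the characteristics $q(t,x)$ and reducing the sign of each derivative to the non-negativity of a single quadratic form in $u$ and $u_x$ evaluated at $(t,q(t,x))$.

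First I would fix $x\in\R$ and compute $A_t(t,x)$ directly from $A=e^{\alpha q}(\beta u-u_x)(t,q)$, keeping $\alpha,\beta$ as free real parameters for the moment. Using the flow equation $q_t=\gamma u(t,q)$ from~\eqref{flow}, the derivative of the exponential weight contributes $\alpha\beta\gamma u^2-\alpha\gamma uu_x$, while the factor $\beta u-u_x$ contributes $\beta(u_t+\gamma uu_x)-(u_{tx}+\gamma uu_{xx})$. The key move is to eliminate the time derivatives: substitute the weak form~\eqref{rod} for $u_t+\gamma uu_x$ and its $x$-differentiated version~\eqref{eq:rodx} for $u_{tx}+\gamma uu_{xx}$. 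Collecting terms, the nonlocal contributions assemble exactly into one convolution $(p-\beta\partial_xp)*\rodnl$, which by Lemma~\ref{lem:zhou} is $\ge\delta u^2$ as soon as $0\le\beta\le1$. This yields
\[
A_t(t,x)\ \ge\ e^{\alpha q(t,x)}\Bigl[\bigl(\alpha\beta\gamma-\tfrac{3-\gamma}{2}+\delta\bigr)u^2-\alpha\gamma uu_x+\tfrac{\gamma}{2}u_x^2\Bigr](t,q(t,x)).
\]

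Next I would force the bracketed quadratic form to be non-negative by choosing $\alpha,\beta$. Since $\gamma>0$, it suffices that the discriminant $\Delta=\alpha^2\gamma^2-2\gamma\bigl(\alpha\beta\gamma-\tfrac{3-\gamma}{2}+\delta\bigr)$ be $\le0$; minimizing $\Delta$ over $\alpha$ yields $\alpha=\beta$, and with that choice $\Delta\le0$ is equivalent to $\beta^2\ge\frac{3-\gamma}{\gamma}-\frac{2\delta}{\gamma}=\beta_\gamma^2$. Taking $\alpha=\beta=\beta_\gamma$ achieves equality; this is the one place where the hypothesis $1\le\gamma\le4$ enters, since it is precisely the range in which $0\le\beta_\gamma\le1$ (read off from~\eqref{def:beta}), so that Lemma~\ref{lem:zhou} is indeed applicable with $\beta=\beta_\gamma$ (for $0<\gamma<1$ one has $\beta_\gamma>1$ and the argument breaks down). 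Hence $A_t\ge0$ on $[0,T)$. The computation for $B=e^{-\alpha q}(\beta u+u_x)(t,q)$ is the mirror image: the exponential weight now contributes the opposite-sign cross term, one gets $B_t\le-e^{-\alpha q}$ times the \emph{same} quadratic form (with $(p+\beta\partial_xp)*\rodnl$, again controlled by Lemma~\ref{lem:zhou}), and the identical choice $\alpha=\beta=\beta_\gamma$ gives $B_t\le0$. Since $u\in C^1([0,T),H^2)$ and $q\in C^1$, the maps $t\mapsto A(t,x)$, $t\mapsto B(t,x)$ are $C^1$, so the sign of the derivative gives the stated monotonicity; letting $T\uparrow T^*$ extends it to the full interval of existence.

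There is no genuinely hard step in the \emph{verification}: the only care needed is (i) the bookkeeping when inserting~\eqref{rod} and~\eqref{eq:rodx}, so that every nonlocal term collapses precisely onto $(p\mp\beta\partial_xp)*\rodnl$ and Lemma~\ref{lem:zhou} can be invoked, and (ii) confirming $\beta_\gamma\in[0,1]$ for $\gamma\in[1,4]$ from~\eqref{def:beta}. The substantive idea — already displayed in the paragraphs preceding the proposition — lies not in the proof but in the \emph{choice} of $A$ and $B$: the naive factorization $\beta_\gamma^2u^2-u_x^2=(\beta_\gamma u-u_x)(\beta_\gamma u+u_x)$ does not produce monotone factors, whereas multiplying by the weights $e^{\pm\alpha q}$ injects exactly the cross term $\mp\alpha\gamma uu_x$ needed to complete the square, and optimizing the discriminant over $\alpha$ pins down $\alpha=\beta=\beta_\gamma$.
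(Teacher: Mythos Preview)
Your proposal is correct and follows the paper's own argument essentially line by line: differentiate $A$ and $B$ along the flow, substitute \eqref{rod} and \eqref{eq:rodx}, apply Lemma~\ref{lem:zhou} to the resulting convolution $(p\mp\beta\partial_xp)*\rodnl$, and then choose $\alpha=\beta=\beta_\gamma$ by minimizing the discriminant, with the constraint $\beta_\gamma\in[0,1]$ (equivalently $1\le\gamma\le4$) being precisely what is needed for the lemma to apply. The only cosmetic imprecision is calling the quadratic form for $B$ ``the same'' as for $A$: the cross term carries the opposite sign, but since the discriminant depends only on its square the non-negativity condition is indeed identical.
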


It is then convenient to factorize
\[
(\beta_\gamma^2 u^2-u_x^2)(t,q(t,x))=A(t,x)B(t,x).
\]
This being achieved, the proof now can proceed like Constantin's \cite{ConstAIF00}:
from inequality~\eqref{inneq} we get
\begin{equation}
 \begin{split}
 \frac{\dd }{\dd t}[u_x(t,q(t,x)]
 &=\textstyle\frac{\gamma}{2}(\beta_\gamma^2 u-u_x^2)(t,q(t,x))\\
 &=\textstyle\frac{\gamma}{2}(AB)(t,x).
 \end{split}
\end{equation}

Now  let $x_0$ be such that $u'(x_0)<-\beta_\gamma|u(x_0)|$.
We denote $g(t)=u_x(t,q(t,x_0))$, $A(t)=A(t,x_0)$ and $B(t)=B(t,x_0)$.
Proposition~\ref{prop:liapu} yields, for all $t\in[0,T)$,
\[ 
A(t)\ge A(0)>0\quad\text{and}\quad B(t)\le B(0)<0
\]  
Thus, $AB(t)\le AB(0)<0$.
Then we get, for all $t\in[0,T)$,
\[
\begin{split}
g'(t)\le \textstyle\frac{\gamma}{2}AB(t)\le \frac{\gamma}{2}AB(0)<0.
\end{split}
\]

Assume, by contradiction, $T=\infty$.
Let $\alpha_0=\frac{\gamma}{2}(u'(0)^2-\beta_\gamma^2 u_0^2)(x_0)$.
Then $g(t)\le g(0)-\alpha_0t$. 
We can choose $t_0$ such that  $g(0)-\alpha_0t_0\le0$ and 
$(g(0)-\alpha_0t_0)^2\ge \|u_0\|_{H^1}^2$.
For  $t\ge t_0$, using that $g(t)\le g(t_0)$, and the conservation of the energy $E(u)=\|u(t)\|_{H^1}^2=\|u_0\|_{H^1}^2$,
 we get
\begin{equation*}
\begin{split}
g'(t)
&\le \textstyle\frac{\gamma}{2}A(t)B(t)=\frac{\gamma}{2}\bigl(\beta_\gamma^2u^2-u_x^2\bigr)(t,q(t,x_0))\\
&\le \textstyle\frac{\gamma}{2}\Bigl( \frac{\beta_\gamma^2}{2}\|u_0\|_{H^1}^2-g(t)^2\Bigr)\\
&\le -\textstyle\frac{\gamma}{4}g(t)^2.
\end{split}
\end{equation*}
This differential inequality implies that, for $t\ge t_0$,
\[
g(t)\le \frac{4g(t_0)}{4+\gamma (t-t_0)g(t_0)}.
\]
Thus, $u_x(t,q(t,x_0))$ must blow up in finite time, and $T^*\le t_0+4/(\gamma|g(t_0)|)<\infty$. This rough upper bound for $T^*$ could be slightly improved refining the above inequality for $g'(t)$.
The condition of the blowup scenario~\eqref{blowup-scena} is fulfilled.
The conclusion thus follows at least when $u_0\in H^3(\R)$. 

If $3/2<s<3$ and $u_0\in H^s(\R)$, we first approximate $u_0$ in the $H^s$-norm by a sequence of data $u_0^n$ belonging to $H^3(\R)$. By the continuous dependence on the data, we see passing to the limit as $n\to\infty$ that the above estimate for $g(t)$ and the upper bound for $T^*$ remain valid also in this case.
\end{proof}

The above proof provides some rough information on the location of the blowup: indeed from the definition of the flow map~\eqref{flow}, it follows that the formation of a singularity occurs somewhere inside the interval 
$[x_0- \frac{\gamma \|u_0\|_{H^1}T^*}{\sqrt 2}, x_0+\frac{\gamma \|u_0\|_{H^1}T^*}{\sqrt 2}]$.


\section{Earlier blow criteria for the rod equation. Some generalizations.}
\label{sec-erod}

Experimental studies on a few hyper-elastic materials revealed that the parameter $\gamma$ can range from
-29.4770 to 3.4174, see \cite{Dai-Huo}. The interval $[1,4]\ni\gamma$ where  Theorem~\ref{th:blow} can be applied thus overlap, but not completely cover, the physically interesting cases.

Adapting to the elastic rod equation the proof of~Theorem~\ref{th:CE},
Y. Zhou~\cite{Zhou-Math-Nachr} established that for $0<\gamma<3$ a blowup occurs provided $\exists\,x_0\in\R$ such that 
\begin{equation}
\label{zhocr}
 |u_0'(x_0)|<-\textstyle\frac{\beta_\gamma}{\sqrt 2}\|u_0\|_{H^1}.
\end{equation}
Theorem~\ref{th:blow}  thus improves Zhou's criterion in the range $1\le\gamma<3$
and extends it to $3\le\gamma\le 4$.
On the other hand, Theorem~\ref{th:blow} cannot be applied for $0<\gamma<1$.
For reader's convenience, let us give a direct and simple proof of  criterion~\eqref{zhocr},
valid for $0<\gamma\le 4$.
We only need to recall inequality~\eqref{inneq}
\begin{equation*}
\frac{\dd }{\dd t}[u_x(t,q(t,x))]
 \le\textstyle\frac{\gamma}{2}(\beta_\gamma^2 u^2-u_x^2)(t,q(t,x)),
\end{equation*}
and the usual $L^\infty$-estimate
\[
\|u\|_\infty\le \textstyle\frac{1}{\sqrt 2}\|u\|_{H^1}=\frac{1}{\sqrt 2}\|u_0\|_{H^1}.
\] 
Letting $g(t)=u_x(t,q(t,x_0))$, then we obtain the differential inequality
\begin{equation*}
g'(t)\le \textstyle\frac{\gamma}{2}\Bigl( \frac{\beta_\gamma^2}{2}\|u_0\|_{H^1}^2-g(t)^2\Bigr)
\end{equation*}
with the initial condition $g(0)=u_0'(x_0)$.
Condition~\eqref{zhocr} ensures $g'(0)<0$ and the blowup follows for $0<\gamma\le4$
integrating this Riccati-type differential inequality .

\begin{remark}
The reason for excluding $\gamma<1$ in Theorem~\ref{th:blow} is that
one would have $\beta_\gamma>1$ (see Figure~\ref{figure1}). In this case, it is not difficult to see that the result of Proposition~\ref{prop:liapu} (implying that the condition $u_0'(x_0)<-\beta_\gamma|u_0(x_0)|$ is conserved by the flow) is no longer valid. The main reason is the lack of the inequalities 
\[
p\pm\beta\partial_x p\ge0
\]
when $\beta>1$.

According to \cite{ConStra00} and \cite{Dai-Huo}, equation~\eqref{rod} admits smooth and orbitally stable solitary waves solutions of the form $\phi_c(x-ct)$ for $\gamma<1$ and orbitally stable peaked solitons $ce^{-|x-ct|}$ for $\gamma=1$.
As observed in~\cite{ConStra00}, if $u(t,x)=\phi_c(x-ct)$ is a solution vanishing at infinity, then the profile $\phi_c$ must be a translate
of $c\phi$, i.e.,
\[
\phi_c(x)=c\phi(x-a), \qquad x\in\R,
\]
where $\phi$ solves the differential equation
\[
(\phi')^2=\gamma\phi(\phi')^2-\phi^3+\phi^2.
\] 
Moreover, $\phi$ is positive, even and monotonically decreasing from its peak.
In addition, it decays exponentially as $|x|\to\infty$, and the same for its derivatives.
For $\gamma<1$, the initial datum $u_0(x)=c\phi(x)$ gives rise to a global smooth solution $u_c=c\phi(x-ct)$.
It follows from the last equation, the parity  and the decay properties of~$\phi$, that we can find a large enough~$x_0$ such that $u'(x_0,t)\simeq -|u(x_0,t)|$.
This means that if one believes in the validity of a local-in-space blowup criterion of the form
 $u_0'(x_0)<-\alpha_\gamma|u_0(x_0)|$ also for $\gamma<1$, then $\alpha_\gamma$ 
 must satisfy $\alpha_\gamma\ge1$.

The restriction $\gamma>4$  might be purely technical as well.
Of course, the expression of the coefficient $\beta_\gamma$ computed in~\eqref{def:beta} in this case does not make sense, but one might conjecture that the statement of~Theorem~\ref{th:blow} holds with another coefficient~$\alpha_\gamma$.
\end{remark}

Let us stress the fact that for $\gamma<0$  or $\gamma>4$ nonlocal blowup criteria are still available:
for example,  it was observed in \cite{ConStra00}, \cite{Zhou-Math-Nachr} that 
a blowup occurs when, for some $x_0$,
\[
u_0'(x_0)>\textstyle\frac{\sqrt{(\gamma-3)/\gamma}}{\sqrt2}\|u_0\|_{H^1}, \qquad\text{if $\gamma<0$},
\]
or
\[
u_0'(x_0)<-\textstyle\frac{\sqrt{(\gamma-3)/\gamma}}{\sqrt2}\|u_0\|_{H^1}, \qquad\text{if $\gamma>3$}.
\]
This readily follows from equation~\eqref{eq:rodx}.
However, for $3<\gamma\le4$, criterion~\eqref{zhocr} is slightly better, as 
$\beta_\gamma\le\sqrt{(\gamma-3)/\gamma}$.

\medskip
In this paper we only considered vanishing boundary conditions at infinity. Different kind of boundary conditions are also of interest.
See~\cite{GHR2012}.
Our method could be used to obtain blowup results, {\it e.g.\/}, for solutions with constant limit at $x\to+\infty$ and $x\to-\infty$,

The main result of the present paper remains valid, after suitable modifications, to other one dimensional models for the propagation of nonlinear waves. For example,  equation~\eqref{rod-pde} is a particular case
(corresponding to $f(u)=\frac{\gamma-3}{2} u^2$, $g(u)=\gamma u^2$) of the so called 
``generalized hyperelastic rod equation'',  
\[
 u_t+f'(u)u_x+\partial_xp*\bigl[g(u)+\textstyle\frac{f''(u)}{2}u_x^2\bigr]=0,                                            
                                            \]
studied, {\it e.g.\/}, in~\cite{HolJDE07}.
The choice $f(u)=u^2$ and $g(u)=\kappa u+u^2$, with $\kappa\ge0$, would correspond to the Camassa--Holm equation with the additional term $\kappa u_x$.
In a forthcoming paper we will address the blowup issue for a large class of functions $f$ and $g$.

\section{Acknowledgements}
The author is grateful to the referee for his careful reading and for his interesting suggestions on the possible developements of the present study.

\end{document}